\documentclass[reqno]{amsart}

\usepackage[absolute,overlay]{textpos}
\usepackage{eso-pic}
\usepackage[a4paper,margin=1.5in]{geometry}
\usepackage{mathrsfs}
\usepackage{amsfonts,  amsmath}
\usepackage{graphicx}
\usepackage{enumerate,  enumitem,  amscd}
\usepackage{amsthm,  amssymb,  epsfig,  hyperref,  amsmath}  
\usepackage{mathtools}
\usepackage{tikz}
\usepackage{tikz-cd}
\usetikzlibrary{arrows,automata,arrows.meta}
\usepackage{quiver}
\usepackage{subcaption}

\tikzset{
  vx/.style  = {circle, draw=black, fill=blue!20, line width=0.6pt,
                minimum size=8pt, inner sep=0pt},
  ed/.style  = {draw=black!60, line width=0.5pt},
  lbl/.style = {font=\tiny},
  ttl/.style = {font=\small}
}

\newcommand{\Z}{\mathbb{Z}}

\newcommand{\N}{\mathbb{N}}

\newcommand{\cR}{\mathcal{R}}

\newcommand{\eps}{\varepsilon}
\newcommand{\FF}{\mathrm{F}}
\newcommand{\ZM}{\mathbb{Z}M}
\newcommand{\cE}{\mathcal{E}}
\newcommand{\cQ}{\mathcal{Q}}
\newcommand{\cZ}{\mathcal{Z}}

\DeclareMathOperator{\FIM}{FIM}
\DeclareMathOperator{\FP}{FP}
\DeclareMathOperator{\id}{id}

\newtheorem{theorem}{Theorem} 
\newtheorem*{theorem*}{Theorem} 
\numberwithin{theorem}{section}
\newtheorem{lemma}[theorem]{Lemma}     
\newtheorem{corollary}[theorem]{Corollary}
\newtheorem*{corollary*}{Corollary}

\newtheorem*{proposition*}{Proposition}
\numberwithin{equation}{section}

\theoremstyle{definition}

\numberwithin{example}{section}
\newtheorem*{question*}{Question}
\numberwithin{question}{section}

\newtheorem*{remark*}{Remark}

\begin{document}

\title[On homological finiteness and free inverse monoids]{On homological finiteness properties and free inverse monoids}
\author{Carl-Fredrik Nyberg-Brodda}
\address{June E Huh Center for Mathematical Challenges, Korea Institute for Advanced Study (KIAS), Seoul 02455, Korea}
\email{cfnb@kias.re.kr}

\thanks{The author is currently supported by KIAS Individual Grant HP094701 at Korea Institute for Advanced Study, and by the Mid-Career Researcher Program (RS-2023-00278510) through the National Research Foundation funded by the government of Korea.}

\date{\today}

\keywords{Free inverse monoids; homological finiteness properties; finite presentability.}
\subjclass[2020]{20M50 (primary); 20M18, 20M05 (secondary)}

\begin{abstract} 
We construct a simple and useful sufficient condition, based on actions on a lattice of idempotents, for monoids admitting homomorphisms to the monogenic free inverse monoid $\FIM(1)$ to not be of type $\FP_2$. This recovers a result of Gray and Steinberg that free inverse monoids are not of type $\FP_2$. The same technique is then used to show that a finitely generated submonoid of $\FIM(1)$ is of type $\FP_2$ if and only if it is finitely presented, answering a question of Cho \& Ru\v{s}kuc. 
\end{abstract}

\maketitle 


\noindent It is by now a well-established fact that finite presentability of inverse monoids qua monoids is a rather delicate issue. One of the most fundamental results in this direction is due to Schein \cite{Schein1975}, who proved that \textit{no} non-trivial rank $r$ free inverse monoid $\FIM(r)$ -- not even when $r=1$, i.e.\ the monogenic one -- is finitely presented as a monoid. Recently, Cho \& Ru\v{s}kuc \cite{ChoRuskuc2025} classified which finitely generated submonoids of the monogenic free inverse monoid are finitely presented as monoids. On the other hand, there is a strong connection between finite presentability and \textit{homological} finiteness properties, particularly the finiteness property $\FP_2$. Indeed, it is easy to show that any finitely presented group (or monoid) satisfies the finiteness property $\FP_2$, but e.g.\ it is a celebrated result of Bestvina \& Brady \cite{Bestvina1997} that $\FP_2$ does not imply finite presentability for groups.

In line with this, there arises a natural program of proving known non-finitely presented submonoids to be not of type $\FP_2$. For example, the author \cite{NybergBrodda2025} has recently shown that free regular $\star$-monoids, an object closely related to inverse monoids, are not of type $\FP_2$. Importantly, Gray \& Steinberg \cite{GraySteinberg2021} proved, by a topological argument via normal forms, that free inverse monoids are not of type $\FP_2$. Cho \& Ru\v{s}kuc \cite{ChoRuskuc2025} thus asked whether one can classify which submonoids of $\FIM(1)$ are of type $\FP_2$. To answer this question, we develop a sufficient and geometric condition, based on grid actions and idempotents, for a monoid to not be of type $\FP_2$ (Theorem~\ref{Thm:main-theorem}).  This lets us prove that any finitely generated inverse monoid which admits a surjection onto the monogenic free inverse monoid $\FIM(1)$ is necessarily not of type $\FP_2$ (Corollary~\ref{Cor:if-surjects-then-not-FP2}). In particular, we recover in a new and geometric way the aforementioned result of Gray \& Steinberg. We also obtain an answer to the question of Cho \& Ru\v{s}kuc (Corollary~\ref{Cor:cho-ruskuc-question}).

I would like to thank Ben Steinberg for helpful comments and encouragement.

\section{Background and setup}

\noindent We assume the reader is familiar with the basics of inverse semigroup theory, and if they are not, then we direct the reader to e.g.\ the standard and excellent book by Lawson \cite{Lawson1998}. We shall also assume the reader is comfortable with the elements of homological algebra, to which we refer the reader e.g.\ to Rotman \cite{Rotman1979}. 

\subsection{Homological setup}\label{Subsec:hom-setup} Let $M$ be a finitely generated monoid, and let $\ZM$ be its monoid ring. We will consider resolutions of the trivial right $\ZM$-module $\Z$. In particular, if $\Z$ admits a resolution 
\begin{equation*}\label{Eq:resolution}
\cdots \to P_n \to P_{n-1} \to \cdots \to P_1 \to P_0 \to \Z \to 0,
\end{equation*}
which we will abbreviate as $P_\bullet \twoheadrightarrow \Z$, such that $P_0, \dots, P_n$ are finitely generated projective $\ZM$-modules, then we say that $M$ is of type \textit{right-$\FP_n$}. If $M$ is of type right-$\FP_n$ for all $n \geq 0$, then we say it is of type right-$\FP_\infty$. For inverse monoids, the inversion involution $\circ^{-1}$ converts any right $\ZM$-module into a left one in the obvious way, and hence left- and right-$\FP_n$ are equivalent; we shall therefore refer to this property as $\FP_n$ when speaking of inverse monoids. Any finitely generated monoid is of type (left- and right-)$\FP_1$, and any finitely presented monoid is of type (left- and right-)$\FP_2$. Both converses fail in general, see \cite{Kobayashi2007}; indeed any monoid with a zero is automatically of type (left- and right-)$\FP_\infty$, see \cite{Cohen1992, Kobayashi2007}. 

There is a standard resolution for any monoid $M$ generated by a finite set $A$, taking $P_0 = \ZM$ and $P_1 = (\ZM)^{A}$, and proceeding to resolve the syzygy module $K_A := \ker(P_1 \to P_0)$. The chain map $\partial_1 \colon P_1 \to P_0$ is defined on the basis element $(a)$, corresponding to $a \in A$, by mapping $(a) \mapsto a-1$. We recall here the basic but very useful fact that the generalized Schanuel's Lemma implies that $M$ is of type right-$\FP_2$ if and only if the syzygy module $K_A$ is finitely generated as a right $\ZM$-module. This will be exploited in the sequel, i.e.\ our manner of proving that $\FP_2$ fails in the monoids under consideration is by showing that $K_A$ is not finitely generated. 

\subsection{The grid complex}\label{Subsec:the-grid-complex} Our argument will only be topological in a very narrow sense, in that we will rely on an action of monoids on certain grids of idempotents. We will only require one topological space: let $\cQ$ be the square CW complex with vertices $\N^2$, unit horizontal and vertical edges, and all unit squares filled by a $2$-cell, so that its geometric realization is all of $[0, \infty)^2$. In coordinates, we let $h_{i,j}$ be the horizontal edge $(i, j) \to (i+1, j)$, and likewise $v_{i,j}$ the vertical edge $(i, j) \to (i, j+1)$. Finally, let $q_{i,j}$ be the $2$-cell glued to $h_{i,j}, v_{i+1,j}, -h_{i,j+1}, -v_{i,j}$ in order. Note that $\cQ$ is obviously contractible. It follows that the augmented cellular chain complex 
\begin{equation}\label{Eq:chain-complex-of-Q}
0 \to C_2(\cQ) \xrightarrow{\partial_2} C_1(\cQ) \xrightarrow{\partial_1} C_0(\cQ) \xrightarrow{\varepsilon} \Z \to 0 
\end{equation}
is exact, where $\partial_1, \partial_2$ are the obvious attachment maps above, and hence that $\partial_2$ induces an isomorphism $\partial_2 \colon C_2(\cQ) \xrightarrow{\sim} \cZ_1(\cQ)$, where $\cZ_1(\cQ) \subseteq C_1(\cQ)$ denotes the $1$-cycles. 

\subsection{The free inverse monoid and incomparable idempotents}\label{Subsec:fim1} Let $\FF_1 = \FIM(1)$ be the monogenic free inverse monoid. We can, as for all free inverse monoids, model its elements as Munn trees, which in the monogenic case simply becomes a pair $(I, t)$, where $I \subseteq \Z$ is a finite non-empty interval with $0 \in I$, and $t \in I$ is a distinguished terminal vertex. We will call an interval containing $0$ a \textit{Munn interval}, in view of this. 

An element $(I, t)$ of $\FIM(1)$ is idempotent if and only if $t=0$. The lattice of idempotents $E(\FF_1)$ of $\FF_1$ is denoted $\cE_1$. Every Munn interval $I$ is of the form $[-i, j]$ for some $i, j \in \N$, and we will denote the idempotent $([-i, j], 0)$ by $\eps_{i,j} \in \cE_1$. Then it is easy to see that the map $\eps_{i,j} \mapsto (i, j) \in \N^2$ is an isomorphism $\cE_1 \cong \N^2$ of semilattices when $\N^2$ is taken to be the usual semilattice with the max operation in each coordinate. The \textit{level} $\lambda(e)$ of an idempotent $e = (I, 0)$ is defined as the edge-length of the interval $I$. Let $e, f \in \cE_1$ be two idempotents. We say that they are \textit{incomparable} if $ef \not\in \{ e, f\}$. Equivalently, if we write $e=\varepsilon_{i,j}$ and $f = \varepsilon_{k, \ell}$, then $e$ and $f$ are incomparable if and only if the interval $[-i, j]$ is not contained in $[-k, \ell]$, and vice versa. The level $\lambda(ef)$ of the product of two incomparable idempotents is then the edge-length of the union of the intervals, i.e.\ $\max(i, k) + \max(j, \ell)$.

\section{A sufficient condition for non-$\FP_2$}

\subsection{The grid action}\label{Subsec:grid-action}

Let $\cQ$ be defined as in \S\ref{Subsec:the-grid-complex}. Recall the canonical right action of $\FF_1 = \FIM(1)$ on its idempotent lattice $\cE_1 = E(\FF_1)$ by $e \circ s = s^{-1}e s$, for $s \in \FF_1$ and $e \in \cE_1$. Let $x$ be a generator of $\FF_1$. Then under the isomorphism $\cE_1 \cong \N^2$ of semilattices, geometrically the action of $x$ on $\cE_1$ amounts to shifting all idempotents one step south-east along a diagonal, and sliding one step to the right on the horizontal axis; while $x^{-1}$ shifts one step north-west and slides up on the  vertical axis. This action is shown in Figure~\ref{Fig:action}. Thus we have an action on $\cQ^{(0)} = \N^2$. We now show that this extends to an action of $\FF_1$ on the entire complex $\cQ$, by mapping an edge $a \to b$ to the edge $a \cdot s \to b \cdot s$ if the endpoints remain distinct, and otherwise map it to a point. 

\begin{figure}[ht]
\centering

\begin{subfigure}[t]{0.47\textwidth}
\centering
\begin{tikzpicture}[
    scale=0.9,
    vertex/.style={circle,fill=black,inner sep=1.1pt},
    maparrow/.style={-{Latex[length=2.8mm,width=2mm]},blue!75!black,line width=1.15pt},
    levelline/.style={gray!55,dashed,line width=0.5pt},
    leveltext/.style={gray!70,font=\scriptsize},
    lab/.style={font=\small}
  ]

  \draw[->] (-0.35,0) -- (4.45,0) node[right] {$i$};
  \draw[->] (0,-0.35) -- (0,4.45) node[above] {$j$};

  \foreach \i in {0,...,4}
    \foreach \j in {0,...,4}
      \node[vertex] at (\i,\j) {};

  \foreach \i in {0,...,3}
    \foreach \j in {1,...,4}
      \draw[maparrow] (\i,\j) -- (\i+1,\j-1);

  \foreach \i in {0,...,3}
      \draw[maparrow] (\i,0) -- (\i+1,0);

  \node[lab] at (2,-0.82) {$(i,j)\cdot x=(i+1,\max\{j-1,0\})$};

\end{tikzpicture}
\caption{The action of $x$.}
\end{subfigure}
\hfill
\begin{subfigure}[t]{0.47\textwidth}
\centering
\begin{tikzpicture}[
    scale=0.9,
    vertex/.style={circle,fill=black,inner sep=1.1pt},
    maparrow/.style={-{Latex[length=2.8mm,width=2mm]},red!75!black,line width=1.15pt},
    levelline/.style={gray!55,dashed,line width=0.5pt},
    leveltext/.style={gray!70,font=\scriptsize},
    lab/.style={font=\small}
  ]

  \draw[->] (-0.35,0) -- (4.45,0) node[right] {$i$};
  \draw[->] (0,-0.35) -- (0,4.45) node[above] {$j$};

  \foreach \i in {1,...,4}
    \foreach \j in {0,...,3}
      \draw[maparrow] (\i,\j) -- (\i-1,\j+1);

  \foreach \j in {0,...,3}
      \draw[maparrow] (0,\j) -- (0,\j+1);
      
  \foreach \i in {0,...,4}
    \foreach \j in {0,...,4}
      \node[vertex] at (\i,\j) {};

  \node[lab] at (2,-0.82) {$(i,j) \cdot x^{-1} =(\max\{i-1,0\},j+1)$};

\end{tikzpicture}
\caption{The action of $x^{-1}$.}
\end{subfigure}

\caption{The conjugation action of the generators of $\FF_1 = \langle x, x^{-1}\rangle$ on the idempotents of $\FF_1$, modelled as an action on $\N^2 = \cQ^{(0)}$.}
\label{Fig:action}
\end{figure}

We can easily describe this in coordinates. If $s = (I, t) \in \FF_1$ is an element, and $I = [-a, b]$ with $a, b \in \N$, then its aforementioned action on points $(i, j) \in \cQ^{(0)}$ becomes 
\[
(i, j) \cdot s = ( \max(i,a)+t , \: \max(j,b) - t) 
\]
while the action on horizontal resp.\ vertical edges becomes
\begin{align*}
h_{i,j} \cdot s =  \begin{cases*} h_{i+t, \max(j,b)-t} & if $i\geq a$, \\ 0 & otherwise. \end{cases*} \:\: \text{and} \:\:
v_{i,j} \cdot s = \begin{cases*} v_{\max(i,a)+t, j-t} & if $j\geq b$, \\ 0 & otherwise. \end{cases*}
\end{align*}
Finally, the action on the $2$-cell $q_{i,j}$ is the natural one extending the edge action, i.e.\ 
\begin{align*}
q_{i,j} \cdot s = \begin{cases*} q_{i+t, j-t} & if $i\geq a$ and $j\geq b$, \\ 0 & otherwise. \end{cases*}
\end{align*}
We claim that this is, in fact, an action on $\cQ$, i.e.\ that $C_1(\cQ)$ and $C_2(\cQ)$ are right $\Z\FF_1$-modules and that $\partial_k(c \cdot s) = \partial_k(c) \cdot s$ for all $s \in \FF_1$ and $c \in C_k(\cQ)$ where $k = 1, 2$. Indeed, note that the action on the vertices $(i,j)$ is clearly associative, since as discussed above it is just the action of $\FF_1$ on its idempotent lattice by conjugation. Next, the images of two adjacent vertices under that action clearly are either adjacent or equal, and the action on the horizontal $h_{i,j}$ described above sends this edge either to the edge making its endpoints adjacent, in the first case, or else to $0$, in the second case. The analogous statement holds for the vertical edges $v_{i,j}$. Thus the edge action is also associative, and commutes with $\partial_1$. Finally, a square $q_{i,j}$ is sent to another square (and not collapsed to $0$) by the action of an element if and only if neither of the coordinate directions are collapsed. In that case, the four vertices are easily checked to be sent to the four vertices of the square $q_{i+t,j-t}$, and otherwise one or both pairs of opposite boundary edges are mapped to the same image with opposite signs, and hence becomes $0$. This immediately yields associativity and that $\partial_2(q_{i,j} \cdot s) = \partial_2(q_{i,j}) \cdot s$, as desired. 

Notice now that the action on squares is diagonal-preserving or degenerate, i.e.\ that if $i+j=n$ then $q_{i,j} \cdot s$ is either $0$ or equal to a square $q_{i', j'}$ with $i'+j' = n$. Hence, we have a grading by $n \geq 0$ of $C_2(\cQ)$ as follows: let $C_{2}^{(n)}(\cQ) = \bigoplus_{i+j=n} \Z[q_{i,j}]$. Then $C_2(\cQ) = \bigoplus_{n \geq 0} C_{2}^{(n)}(\cQ)$, and every $C_2^{(n)}(\cQ)$ is a right $\Z\FF_1$-submodule of $C_2(\cQ)$. Set
\begin{equation}\label{Eq:L_n-decomp}
L_n := \partial_2(C_2^{(n)}(\cQ)) = \partial_2 \left(  \bigoplus_{i+j=n} \Z[q_{i,j}] \right).
\end{equation}
Notice now that since $\partial_2$ is an isomorphism onto $\cZ_1(\cQ)$, we see that every $L_n$ is a right $\Z\FF_1$-submodule of $\cZ_1(\cQ)$. This is, as we shall presently see, the core reason behind the failure of $\FP_2$ for free inverse monoids.

\subsection{Idempotents give many cycles}\label{Subsec:idempotents-give-cycles}

Let now $M$ be a finitely generated monoid (not necessarily inverse) and $\tau \colon M \to \FF_1$ any homomorphism. Then by restriction of scalars via $\tau$, the complex $\cQ$ becomes a right $\ZM$-module. Recall the (not necessarily free) resolution constructed for $M$ in \S\ref{Subsec:hom-setup}, with $P_0 = \ZM, P_1= (\ZM)^A$, and $K_A$ the second syzygy module. Let $\mathbf{0} = (0,0)$ be a base vertex of $\cQ$. We define a $\ZM$-linear map $\Phi_0 \colon \ZM \to C_0(\cQ)$ by for all $m \in M$ setting $\Phi_0(m) = \mathbf{0} \cdot m$. Note that $(\varepsilon \circ \Phi_0) (m) = \varepsilon(m)$ for all $m \in M$ and extending $\Z$-linearly. Next, for each $a \in A$ let $\eta_a$ be a directed edge path from $\mathbf{0}$ to $\mathbf{0} \cdot a$ using only positively oriented horizontal and vertical edges, i.e.\ $\eta_a$ is some path connecting $\mathbf{0}$ with its image under the action of $a$. Then define a map $\Phi_1 \colon (\ZM)^A \to C_1(\cQ)$ by mapping the basis element $(a)$ of $(\ZM)^A$ corresponding to $a \in A$ to $\eta_a$, and extending $\ZM$-linearly, i.e.\ $\Phi_1( (a)m ) = \eta_a \cdot m$ for all $a \in A$ and $m \in M$. Then for all basis elements $(a) \in (\ZM)^A$ we have
\[
(\Phi_0 \circ d_1)( (a) ) = \Phi_0 ( a - 1 ) = \mathbf{0}\cdot a - \mathbf{0} = \partial_1 (\eta_a) = (\partial_1 \circ \Phi_1) ((a)),
\]
so that $\Phi_0 d_1 = \partial_1 \Phi_1$. We now can construct a map $\Phi_2 \colon K_A \to C_2(\cQ)$ by a standard diagram chase. Indeed, let $w \in K_A = \ker(d_1)$. Then $(\Phi_0 \circ d_1)(w) = 0$, so $(\partial_1 \circ \Phi_1)(w) = 0$, so $\Phi_1(w) \in \ker(\partial_1) = \cZ_1(\cQ)$. Since $\cQ$ is contractible we have that $\partial_2$ restricts to an isomorphism $C_2(\cQ)\cong \cZ_1(\cQ)$, so in particular there is a unique preimage $q$ of $\Phi_1(w)$ in $C_2(\cQ)$. We then define $\Phi_2(w)$ to be this preimage, i.e.\ $\Phi_2(w) = q$. Note that $\Phi_2$ is, by construction, a $\ZM$-linear map, since $\partial_2$ is an isomorphism of $\ZM$-modules. We thus have the following diagram where all squares commute.
\[\begin{tikzcd}[column sep=large, row sep=large]
	0 & {K_A} & {(\ZM)^A} & \ZM & \Z & 0 \\
	0 & {C_2(\cQ)} & {C_1(\cQ)} & {C_0(\cQ)} & \Z & 0
	\arrow[from=1-1, to=1-2]
	\arrow[hook,from=1-2, to=1-3]
	\arrow["{\Phi_2}", from=1-2, to=2-2]
	\arrow["{d_1}", from=1-3, to=1-4]
	\arrow["{\Phi_1}", from=1-3, to=2-3]
	\arrow["\varepsilon", from=1-4, to=1-5]
	\arrow["{\Phi_0}", from=1-4, to=2-4]
	\arrow[from=1-5, to=1-6]
	\arrow["\id", from=1-5, to=2-5]
	\arrow[from=2-1, to=2-2]
	\arrow["{\partial_2}"', from=2-2, to=2-3]
	\arrow["{\partial_1}"', from=2-3, to=2-4]
	\arrow["\varepsilon"', from=2-4, to=2-5]
	\arrow[from=2-5, to=2-6]
\end{tikzcd}\]
Furthermore, we of course immediately get an exact sequence 
\begin{equation}
0 \to \ker(\Phi_2 \mid_{K_A}) \to K_A \to \Phi_2(K_A) \to 0.
\end{equation}
Hence, proving $K_A$ is not finitely generated reduces to showing that $\Phi_2(K_A) \subseteq C_2(\cQ)$ is not finitely generated. Recall from \S\ref{Subsec:grid-action} that $C_2(\cQ)$ is graded as $\bigoplus_{n \geq 0} C_2^{(n)}(\cQ)$, corresponding to the level $n=i+j$ of $q_{i,j}$. The following is now our key lemma.

\begin{lemma}\label{Lem:key-lemma}
Let $e, f \in E(M)$ be a pair of commuting idempotents, and suppose that $\tau(e)$ and $\tau(f)$ are incomparable. Let $n = \lambda(\tau(ef))$ be the level of the idempotent $\tau(ef)$. Then there is some $z \in K_A$ such that the support of $\Phi_2(z) \in C_2(\cQ)$ has a non-zero component in level $n-2$. 
\end{lemma}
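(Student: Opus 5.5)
The plan is to build $z$ directly from the relation $ef = fe$ in $M$, via Fox-type derivatives. Since the resolution uses right modules with $d_1((a)m) = am - m$, for a word $u = a_1 a_2 \cdots a_k$ over $A$ I set
\[
D(u) = \sum_{r=1}^{k} (a_r)\, a_{r+1}\cdots a_k \in (\ZM)^A .
\]
The sum telescopes, so $d_1(D(u)) = u - 1$. Fix words $u_e, u_f$ over $A$ representing $e$ and $f$, and put $z = D(u_e u_f) - D(u_f u_e)$. Then $d_1(z) = ef - fe = 0$, so $z \in K_A$. It remains to locate a square of level $n-2$ in the support of $\Phi_2(z)$, where $\partial_2 \Phi_2(z) = \Phi_1(z)$.

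\emph{Step 1: $\Phi_1(D(u))$ is a monotone edge path.} Each term $\Phi_1((a_r)\,a_{r+1}\cdots a_k) = \eta_{a_r}\cdot a_{r+1}\cdots a_k$ is the image of a positively oriented path under the action. By the formulas in \S\ref{Subsec:grid-action}, the action sends positive horizontal (resp.\ vertical) edges to positive horizontal (resp.\ vertical) edges or to $0$. So each term is a positively oriented (monotone) path from $\mathbf{0}\cdot a_{r+1}\cdots a_k$ to $\mathbf{0}\cdot a_r\cdots a_k$. Concatenating, $\Phi_1(D(u))$ is the $1$-chain of a monotone staircase from $\mathbf 0$ to $\mathbf 0\cdot u$. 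Two facts about its vertices:
\begin{itemize}
\item For any suffix $v$ of $u$, the staircase passes through $\mathbf 0\cdot v$.
\item $\mathbf{0}\cdot v$ corresponds to $\tau(v)^{-1}\tau(v)$, because $\mathbf 0 = \eps_{0,0}$ is the identity. When $v$ represents an idempotent this vertex is just $\tau(v)$.
\end{itemize}
Write $\tau(e) = \eps_{i,j}$ and $\tau(f) = \eps_{k,\ell}$. By incomparability we may assume $k < i$ and $j < \ell$. Then $\tau(ef) = \eps_{i,\ell}$ and $n = i + \ell$.
\begin{itemize}
\item The staircase $\gamma_1 = \Phi_1(D(u_eu_f))$ runs from $\mathbf 0$ through $(k,\ell)$ to $(i,\ell)$. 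By monotonicity its portion after $(k,\ell)$ stays on row $\ell$, so it contains the horizontal edge $h_{i-1,\ell}$.
\item The staircase $\gamma_2 = \Phi_1(D(u_fu_e))$ passes through $(i,j)$ and then climbs column $i$. So its unique horizontal edge in column $i-1$ has height at most $j \le \ell - 1$.
\end{itemize}

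\emph{Step 2: read off the coefficient.} For a $2$-chain $q = \sum c_{a,b}\, q_{a,b}$, the coefficient $c_{a,b}$ is recovered from $\partial_2 q$ by summing, with sign, the coefficients of the horizontal edges $h_{a,b'}$ with $b' > b$. This holds because column $a$ telescopes: $q_{a,b}$ contributes $h_{a,b} - h_{a,b+1}$. Hence, up to a global sign,
\[
c_{a,b} = \#\{\text{horizontal edges of } \gamma_1 \text{ in column } a \text{ above height } b\} - (\text{same for } \gamma_2).
\]
A monotone staircase from $\mathbf 0$ to $(i,\ell)$ has exactly one horizontal edge in each column $a < i$. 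Take $(a,b) = (i-1,\ell-1)$: $\gamma_1$ contributes $1$, because its edge is at height $\ell$, and $\gamma_2$ contributes $0$. Thus $c_{i-1,\ell-1} = \pm 1$, and $q_{i-1,\ell-1}$ has level $i + \ell - 2 = n-2$.

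The main care point is Step 1. The terms of $\Phi_1(D(u))$ have nonnegative coefficients and no cancellation, which the monotonicity argument guarantees, so the chain really is the edge set of a single staircase. One must also check that the identification of $\mathbf 0\cdot v$ with $\tau(v)^{-1}\tau(v)$ is compatible with the right action convention $e\circ s = s^{-1}es$.
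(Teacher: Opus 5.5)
Your proof is correct and is essentially the paper's argument: since $D(uv) = D(u)\,v + D(v)$, your $z = D(u_eu_f) - D(u_fu_e)$ is exactly the paper's $z = p_f + p_e f - p_e - p_f e$, and you detect the same square $q_{i-1,\ell-1}$ with coefficient $\pm 1$. The only difference is how you read off that coefficient. The paper first confines $\Phi_2(z)$ to the rectangle $\{0,\dots,i\}\times\{0,\dots,\ell\}$ (by contractibility and uniqueness of fillings) and then inspects the boundary edge $v_{i,\ell-1}$, which lies only in $\eta_f\cdot e$. You instead invert $\partial_2$ directly by telescoping the horizontal edges in column $i-1$; this avoids the support argument and uses only monotonicity and the vertices $(k,\ell)$ and $(i,j)$ through which the two staircases pass.
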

\begin{proof}
For a word $w \equiv a_1 a_2 \cdots a_n \in A^\ast$ with $a_i \in A$, let $p_w = \sum_{i=1}^n (a_i) a_{i+1} \cdots a_n \in (\ZM)^A$. Then it is easy to see, by telescoping, that $d_1(p_w) = w-1$, and that if we let
\[
\eta_w := \Phi_1(p_w) = \sum_{i=1}^{n} \eta_{a_i} \cdot a_{i+1} \cdots a_n
\]
then $\eta_w$ is a directed path from $\mathbf{0}$ to $\mathbf{0} \cdot w$. 

Let now $e, f$ be as in the statement of the lemma. Assume, after interchanging $e$ and $f$ if necessary, that $\tau(e) = \varepsilon_{i,j}$ and $\tau(f) = \varepsilon_{k,\ell}$ with $i>k$ and $j < \ell$. Then $\tau(ef) = \varepsilon_{i,\ell}$. Write $e = a_1 \cdots a_n$ and $f = b_1 \cdots b_k$, where $a_\mu, b_\nu \in A$ for all $\mu, \nu$. Let $\eta_e = \Phi_1(p_e)$ and $\eta_f = \Phi_1(p_f)$ as above. Let
\[
z = p_f + p_e f - p_e - p_f e
\]
being, informally speaking, the element corresponding to going around the square obtained from having $ef = fe$. We claim that $z$ has the required property in the conclusion of the lemma. First, note that $z \in K_A$. Indeed, 
\[
d_1(z) = (f-1) + (e-1)f - (e-1) - (f-1) e = ef-fe = 0.
\]
We now have $\Phi_1(z) = \eta_f + \eta_e \cdot f - \eta_e - \eta_f \cdot e$. Let $\cR$ be the subcomplex of $\cQ$ with vertex set $\{ 0, \dots, i\} \times \{ 0, \dots, \ell \}$. The cycle $\Phi_1(z)$ is entirely supported in $\cR$, since $\eta_e$ and $\eta_f$ are directed paths using only positively oriented edges. The complex $\cR$ is contractible, so $\Phi_1(z)$ bounds a $2$-chain supported in $\cR$, and since $\partial_2 \colon C_2(\cQ) \to \cZ_1(\cQ)$ is injective, this $2$-chain must be the unique filling $\Phi_2(z)$. Hence $\Phi_2(z)$ is supported in $\cR$. Write
\[
\Phi_2(z) = \sum_{r,s \geq 0} c_{r,s}q_{r,s},
\]
where $c_{r,s}\in \Z$, and consider the top-right edge $v_{i,\ell-1} \colon (i,\ell-1) \to (i,\ell)$. It is easy to see that the path $\eta_f \cdot e$ contains $v_{i,\ell-1}$ exactly once, with coefficient $1$. On the other hand, since $i>k$ and $\ell>j$, the paths $\eta_e$ and $\eta_f$ do not contain $v_{i,\ell-1}$, and neither can the path $\eta_e \cdot f$. Hence $v_{i,\ell-1}$ has coefficient $1$ in $\eta_f \cdot e$, and hence $v_{i,\ell-1}$ has coefficient $-1$ in $\Phi_1(z)$, since $p_f e$ has coefficient $-1$ in $z$. Finally, since the only square $q_{r,s}$ in $\cR$ which contains the top right edge $v_{i,\ell-1}$ is $q_{i-1,\ell-1}$, it follows that $c_{i-1, \ell-1} = -1$. The level of the square $q_{i-1,\ell-1}$ is $(i-1) + (\ell - 1) = i + \ell -2 = \lambda(\tau(ef)) - 2 = n-2$, as desired.
\end{proof}

This lemma is sufficient to prove our main theorem.

\subsection{A criterion for failing $\FP_2$}

We can now present the main theorem of this article, which amounts to saying that any monoid $M$ which maps homomorphically to sufficiently large parts of the idempotent grid of $\FF_1$ is not of type $\FP_2$. 

\begin{theorem}\label{Thm:main-theorem}
Let $M$ be a finitely generated monoid, and let $\tau \colon M \to \FF_1$ be a homomorphism. Suppose that $M$ contains infinitely many pairs of commuting idempotents $e_i, f_i \in E(M)$ for $i = 1, 2, \dots$ such that $\tau(e_i)$ and $\tau(f_i)$ are incomparable, and such that $\tau(e_i)\neq\tau(e_j)$ for $i \neq j$. Then $M$ is not of type left- or type right-$\FP_2$. 
\end{theorem}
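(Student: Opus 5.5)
We prove that $K_A$ is not finitely generated as a right $\ZM$-module. By \S\ref{Subsec:hom-setup} this shows that $M$ is not of type right-$\FP_2$. It suffices to show that the image $\Phi_2(K_A) \subseteq C_2(\cQ)$ is not finitely generated, since it is a quotient of $K_A$.

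The key observation is that $\Phi_2(K_A)$ is a $\ZM$-submodule of $C_2(\cQ) = \bigoplus_{n\ge 0} C_2^{(n)}(\cQ)$. Each graded piece $C_2^{(n)}(\cQ)$ is a $\Z\FF_1$-submodule, and hence a $\ZM$-submodule via $\tau$. Suppose $\Phi_2(K_A)$ were generated by $w_1,\dots,w_r$. Each $w_k$ is a finite sum of squares, so there is an $N$ such that every $w_k$ lies in $\bigoplus_{n\le N} C_2^{(n)}(\cQ)$. This direct sum is a $\ZM$-submodule. Hence every element of $\Phi_2(K_A)$ is supported in levels $\le N$.

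Now we use the pairs $e_i, f_i$. By Lemma~\ref{Lem:key-lemma}, for each $i$ there is a $z_i\in K_A$ such that $\Phi_2(z_i)$ has a nonzero component in level $\lambda(\tau(e_if_i))-2$. It remains to show that these levels are unbounded.

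Write $\tau(e_i)=\eps_{a_i,b_i}$ and $\tau(f_i)=\eps_{c_i,d_i}$. Then $\lambda(\tau(e_if_i)) = \max(a_i,c_i)+\max(b_i,d_i) \ge a_i+b_i = \lambda(\tau(e_i))$. The idempotents $\tau(e_i)$ are pairwise distinct. For each fixed level $L$, only finitely many idempotents $\eps_{a,b}$ have $a+b = L$, namely $L+1$ of them. So the levels $\lambda(\tau(e_i))$ are unbounded, and therefore so are the levels $\lambda(\tau(e_if_i))-2$. Choosing $i$ with $\lambda(\tau(e_if_i))-2>N$ contradicts the bound above. So $K_A$ is not finitely generated and $M$ is not of type right-$\FP_2$.

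The left-handed statement needs separate care. $M$ need not be inverse, so the involution argument is unavailable, and left-$\FP_2$ must be handled by a mirrored construction. I would resolve $\Z$ as a left $\ZM$-module, with $d_1((a)) = a-1$ and left-linear maps. I would let $M$ act on the left of $\cQ$ via $s\cdot c := c\cdot \tau(s)^{-1}$. This uses the involution in $\FF_1$ rather than in $M$: since $\tau(st)^{-1}=\tau(t)^{-1}\tau(s)^{-1}$, it is a left action. It again preserves the grading and the cellular structure. The key lemma goes through with $z = (f) + e(f)\ldots$ mirrored, namely $z = p'_f + e\,p'_f - p'_e - f\,p'_e$, where $p'_w$ is the left telescoping sum. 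The images $\tau(e)^{-1} = \tau(e)$ and $\tau(f)^{-1}=\tau(f)$ are still idempotent and incomparable, so the corner-edge argument is identical.

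The main obstacle is this left/right transfer. I expect it to be routine once one notes that idempotents are fixed by inversion. Every other step follows from the grading together with Lemma~\ref{Lem:key-lemma}.
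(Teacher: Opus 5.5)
Your right-handed argument is the paper's proof: unbounded levels $\lambda(\tau(e_if_i))$, Lemma~\ref{Lem:key-lemma}, and the fact that a finitely generated submodule of $C_2(\cQ)$ lives in finitely many grades. You also spell out two steps the paper leaves implicit: that $\bigoplus_{n\le N}C_2^{(n)}(\cQ)$ is a $\ZM$-submodule, and that $\lambda(\tau(e_if_i))\ge\lambda(\tau(e_i))$. Defining a left action by $s\cdot c=c\cdot\tau(s)^{-1}$ is the right idea for the left-handed case, and is more explicit than the paper's \emph{mutatis mutandis}.

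There is, however, a concrete error in your mirrored element. Take $p'_w=\sum_i a_1\cdots a_{i-1}(a_i)$ and left-linear $d_1$. Then your $z=p'_f+e\,p'_f-p'_e-f\,p'_e$ satisfies $d_1(z)=(f-1)+e(f-1)-(e-1)-f(e-1)=2(f-e)$. This is nonzero, since $\tau(e)\neq\tau(f)$, so $z\notin K_A$ and the lemma cannot be run on it.

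The correct mirror of $p_f+p_ef-p_e-p_fe$ is $z=p'_f+f\,p'_e-p'_e-e\,p'_f$, which gives $d_1(z)=fe-ef=0$. To finish the lemma, write $\eta'_w=\Phi_1(p'_w)$, $\tau(e)=\eps_{i,j}$ and $\tau(f)=\eps_{k,\ell}$ with $i>k$ and $j<\ell$. Then $e\cdot\eta'_f=\eta'_f\cdot\tau(e)=\sum_{s=j}^{\ell-1}v_{i,s}$. The chains $\eta'_e$, $\eta'_f$ and $f\cdot\eta'_e$ all avoid $v_{i,\ell-1}$. So the corner edge again has coefficient $-1$ in $\Phi_1(z)$, which forces the coefficient of $q_{i-1,\ell-1}$ in $\Phi_2(z)$ to be $-1$, and the lemma goes through.

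A cleaner way to get the left-handed statement without redoing any computation: $s\mapsto\tau(s)^{-1}$ is a homomorphism $M^{\mathrm{op}}\to\FF_1$. The $e_i,f_i$ are still commuting idempotents of $M^{\mathrm{op}}$, and their images are unchanged because idempotents are self-inverse. So the right-handed result for $M^{\mathrm{op}}$ shows that $M$ is not of type left-$\FP_2$. This is exactly your left action, packaged so that the sign bookkeeping is inherited rather than recomputed.
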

\begin{proof}
First, we notice that since all Green's $\mathscr{D}$-classes of $\FF_1$ are finite, for every $N \geq 0$ there are only finitely many idempotents in $\FF_1$ of level at most $N$, and hence the hypotheses of the theorem imply that the set of levels $\{ \lambda(\tau(e_i f_i)) \mid i = 1, 2, \dots \}$ is unbounded. By Lemma~\ref{Lem:key-lemma}, the unboundedness of these levels implies that the image $\Phi_2(K_A)$ of the syzygy module $K_A$ is not supported in finitely many grades inside $C_2(\cQ) = \bigoplus_{n \geq 0} C_2^{(n)}(\cQ)$. Hence $\Phi_2(K_A)$ cannot possibly be finitely generated; and hence neither can $K_A$. Thus $M$ is not of type right-$\FP_2$. All arguments performed up to this point have been left-right agnostic, once a handedness has been fixed; thus, \textit{mutatis mutandis}, we also conclude that $M$ is not of type left-$\FP_2$. 
\end{proof}

Of course, an immediate corollary is that if we hit \textit{all} of the idempotents of $\FF_1$, then $M$ is not of type $\FP_2$. In other words, we immediately conclude: 

\begin{corollary}\label{Cor:if-surjects-then-not-FP2}
Let $M$ be a finitely generated inverse monoid which surjects onto $\FF_1$. Then $M$ is not of type left- or right-$\FP_2$. 
\end{corollary}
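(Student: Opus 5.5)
We apply Theorem~\ref{Thm:main-theorem} with $\tau \colon M \to \FF_1$ the given surjection. What we need is infinitely many pairs of commuting idempotents $e_i, f_i \in E(M)$ whose images are incomparable and have pairwise distinct first members $\tau(e_i)$.

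First, fix the target pairs in $\cE_1$. For $n \geq 1$, take $\varepsilon_{n,0}$ and $\varepsilon_{0,n}$, i.e.\ the intervals $[-n,0]$ and $[0,n]$. Neither interval contains the other, so these are incomparable, and the first members $\varepsilon_{n,0}$ are pairwise distinct as $n$ varies.

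Second, lift these pairs to idempotents of $M$. Since $\tau$ is surjective, choose $s_n, t_n \in M$ with $\tau(s_n) = \varepsilon_{n,0}$ and $\tau(t_n) = \varepsilon_{0,n}$. These need not be idempotent, so replace them by $e_n := s_n s_n^{-1}$ and $f_n := t_n t_n^{-1}$, which are idempotents of the inverse monoid $M$. A homomorphism of inverse monoids preserves inverses, so
\[
\tau(e_n) = \tau(s_n)\tau(s_n)^{-1} = \varepsilon_{n,0}\varepsilon_{n,0} = \varepsilon_{n,0},
\]
and similarly $\tau(f_n) = \varepsilon_{0,n}$. Idempotents of an inverse monoid commute, so $e_n f_n = f_n e_n$.

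With these pairs all hypotheses of Theorem~\ref{Thm:main-theorem} hold, and it gives that $M$ is neither left- nor right-$\FP_2$. The only point needing care is the lifting step: a preimage of an idempotent need not itself be idempotent. The fix is the $s s^{-1}$ trick, which relies on $M$ being inverse and on monoid homomorphisms between inverse monoids automatically preserving inverses.
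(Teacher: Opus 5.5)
Your proof is correct and follows the same route as the paper's: apply Theorem~\ref{Thm:main-theorem} to the surjection, lift idempotents of $\FF_1$ to idempotents of $M$, and use that idempotents of an inverse monoid commute. You also make explicit two points the paper leaves tacit, namely the lifting via $s_n s_n^{-1}$ and a concrete choice of incomparable pairs $\varepsilon_{n,0}, \varepsilon_{0,n}$ with pairwise distinct first members.
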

\begin{proof}
Indeed, every idempotent in $\FF_1$ has an idempotent element in its pre-image, and these idempotents commute because $M$ is inverse. Thus Theorem~\ref{Thm:main-theorem} applies. 
\end{proof}

Thus we recover the theorem of Gray \& Steinberg mentioned in the introduction, avoiding their argument via normal forms. We also avoid their usage of a non-trivial result due to Pride \cite{Pride} that being of type $\FP_2$ is preserved under taking retractions. 

\begin{corollary}[{Gray \& Steinberg, 2021 \cite{GraySteinberg2021}}]\label{Cor:ff1-not-}
Let $\FF_r$ be the free inverse monoid of rank $r \geq 1$. Then $\FF_r$ is not of type left- or right-$\FP_2$. 
\end{corollary}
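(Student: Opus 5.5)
The plan is to deduce this directly from Corollary~\ref{Cor:if-surjects-then-not-FP2}. Fix $r \geq 1$ and a free basis $x_1, \dots, x_r$ of $\FF_r$. Then $\FF_r$ is a finitely generated inverse monoid. It remains to exhibit a surjective homomorphism $\FF_r \to \FF_1$.

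By the universal property of the free inverse monoid, the assignment $x_1 \mapsto x$ and $x_k \mapsto 1$ for $k \geq 2$, where $x$ generates $\FF_1$, extends uniquely to a homomorphism of inverse monoids $\pi \colon \FF_r \to \FF_1$. Any such homomorphism automatically preserves inverses, so $\pi(x_1^{-1}) = x^{-1}$. The image of $\pi$ is therefore an inverse submonoid of $\FF_1$ containing both $x$ and $x^{-1}$. Since $\FF_1$ is generated as a monoid by $x$ and $x^{-1}$, the image is all of $\FF_1$, and $\pi$ is surjective.

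Corollary~\ref{Cor:if-surjects-then-not-FP2} now shows that $\FF_r$ is neither of type left-$\FP_2$ nor of type right-$\FP_2$. For $r=1$ we may instead take $\pi$ to be the identity map.

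There is no real obstacle in this argument. The only point needing care is that $\pi$ is a homomorphism of monoids, which is what Theorem~\ref{Thm:main-theorem} requires through restriction of scalars. Homomorphisms of inverse monoids are in particular monoid homomorphisms, so this is automatic. If one wanted to avoid citing the universal property, one could instead define $\pi$ on Munn trees: project the Munn tree in the Cayley graph of the free group onto the $x_1$-coordinate, obtaining an interval containing $0$, and send the endpoint to its $x_1$-exponent sum. Checking directly that this map is multiplicative would then be the only computation.
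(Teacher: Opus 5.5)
Your proposal is correct and follows essentially the same route as the paper. The paper also obtains this corollary by noting that $\FF_r$ is a finitely generated inverse monoid that surjects onto $\FF_1$ via the universal property, and then applying Corollary~\ref{Cor:if-surjects-then-not-FP2}. That argument bypasses both the normal-form argument of Gray \& Steinberg and Pride's retraction result.
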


Note that the case $r=1$ of Corollary~\ref{Cor:ff1-not-} together with the aforementioned retraction result due to Pride \cite{Pride} together imply Corollary~\ref{Cor:if-surjects-then-not-FP2} by the universal property, which then implies Corollary~\ref{Cor:ff1-not-} for all ranks $r \geq 1$. We have thus avoided this detour by our direct argument. We finally also note that, as in \cite{GraySteinberg2021}, we thus also recover Schein's \cite{Schein1975} classical result that free inverse monoids of rank $r \geq 1$ are not finitely presented.

The main theorem is not limited to surjections, and we now apply it to answer the question of Cho \& Ru\v{s}kuc from the introduction. We recall the setting. In their article \cite{ChoRuskuc2025}, the authors proved that a finitely generated submonoid of $\FF_1$ is finitely presented if and only if it contains finitely many idempotents, and asked if $\FP_2$ is also equivalent to this property.  We give a positive answer to this question as follows:

\begin{corollary}\label{Cor:cho-ruskuc-question}
Let $M$ be a finitely generated submonoid of $\FF_1$. Then t.f.a.e.:
\begin{enumerate}
\item $M$ is finitely presented; 
\item $M$ is of type (left-) right-$\FP_2$; 
\item $M$ contains finitely many idempotents.
\end{enumerate}
\end{corollary}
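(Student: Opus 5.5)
The implication (1)$\Rightarrow$(2) is the general fact recalled in \S\ref{Subsec:hom-setup}: finitely presented monoids are of type left- and right-$\FP_2$. The implication (3)$\Rightarrow$(1) is the theorem of Cho \& Ru\v{s}kuc \cite{ChoRuskuc2025} quoted just before the statement, namely that a finitely generated submonoid of $\FF_1$ is finitely presented if and only if it has finitely many idempotents. It therefore remains to show (2)$\Rightarrow$(3). Equivalently, we show that if $M$ has infinitely many idempotents, then $M$ is not of type left- or right-$\FP_2$. For this we apply Theorem~\ref{Thm:main-theorem} with $\tau \colon M \hookrightarrow \FF_1$ the inclusion.

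Since $\tau$ is injective, the condition $\tau(e_i)\neq\tau(e_j)$ just says that the $e_i$ are distinct. Any two idempotents of $M$ commute, because they are idempotents of the inverse monoid $\FF_1$. So all we need is infinitely many pairs $(e_i,f_i)$ of incomparable idempotents of $M$ with pairwise distinct $e_i$. Write $E(M)\subseteq \cE_1\cong\N^2$. Under this identification, comparability of $\eps_{i,j}$ and $\eps_{k,\ell}$ is comparability in the product order on $\N^2$. The task is therefore to find infinitely many elements of $E(M)$, each of which is incomparable to some other element of $E(M)$.

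The main obstacle is to rule out the degenerate case in which $E(M)$ is infinite but "almost a chain". The plan is to use the fact that $M$ is a finitely generated monoid, so it contains elements moving in both directions, together with the conjugation action.

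First, if $E(M)$ is infinite, then $M$ must contain an element $s=(I,t)$ with $t\neq 0$. Otherwise $M$ would be generated by finitely many idempotents, and a monoid generated by finitely many commuting idempotents is finite. If moreover every non-idempotent element had $t$ of the same sign, say $t>0$, then every product would have support of the form $[-a,b]$ with $a$ bounded by the generators, since going right never extends leftwards past the generators' left extents. I will try to show that this forces the idempotents to be the finitely many products of idempotent generators. The claimed mechanism is that a product is idempotent only if the total displacement is $0$, and with all displacements nonnegative this forces every factor to have $t=0$. So infinitely many idempotents require generators $s,u$ with $t_s>0>t_u$.

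Then idempotents of the form $s^m u^{m'}\cdots$ realise arbitrarily large intervals. Concretely, I expect the elements $s^{p}u^{q}(s^{p}u^{q})^{-1}$ to be unavailable, since $M$ is not closed under inverses, so instead I will use the idempotents $e_N=(s^{|t_u|}u^{t_s})^N$, whose intervals grow. Against these I will compare suitable conjugate-like products such as $u^{t_s}e_Ns^{|t_u|}$, which are idempotents of $M$ whose intervals are shifted relative to $e_N$. For large $N$ the two intervals overlap without containment, which gives incomparable pairs with distinct $e_N$.

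Checking this interval bookkeeping is the one technical step. If it fails, I would fall back on the elementary fact that an infinite subset of $\N^2$ in which all but finitely many elements are comparable to everything else contains an infinite chain. I would then argue from the shape of $M$'s idempotents that such chains come with incomparable companions.
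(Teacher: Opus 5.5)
Your overall reduction matches the paper's. (1)$\Rightarrow$(2) is general, (3)$\Rightarrow$(1) is Cho--Ru\v{s}kuc, and (2)$\Rightarrow$(3) is Theorem~\ref{Thm:main-theorem} applied to the inclusion. That theorem needs infinitely many incomparable pairs $(e_i,f_i)$ in $E(M)$ with pairwise distinct $e_i$. The paper takes these pairs from \cite[Lemma~3.2, Remark~3.3(3)]{ChoRuskuc2025}. You try to build them by hand, and the construction you give does not work.

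Your preliminary steps are sound: some generator has $t\neq 0$, and adding displacements forces generators $s,u\in M$ with $t_s>0>t_u$. The problem is that $s^{|t_u|}u^{t_s}$ has displacement $t_s|t_u|+t_st_u=0$, so it is already idempotent. Hence $e_N=(s^{|t_u|}u^{t_s})^N=e_1$ for every $N$, and the intervals do not grow. The products $u^{t_s}e_Ns^{|t_u|}$ are likewise independent of $N$. So you obtain at most one pair, which violates the hypothesis $\tau(e_i)\neq\tau(e_j)$. For instance, with $M=\FF_1$, $s=x$, $u=x^{-1}$ you only ever see $\eps_{0,1}$ and $\eps_{1,0}$. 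The fallback does not close the gap either. Every infinite subset of $\N^2$ contains an infinite chain (Dickson's lemma), so that remark carries no information. And ``argue from the shape of $M$'s idempotents'' is precisely the step that needs proof.

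The fix is to put the parameter in the exponents rather than outside. Write $s=([-a_s,b_s],t_s)$, $u=([-a_u,b_u],t_u)$, $D=t_s|t_u|$, $L=\max(a_s,a_u-|t_u|)$ and $R=\max(b_u,b_s-t_s)$. Using $(I,t)(J,t')=(I\cup(t+J),t+t')$, one computes for $p,q\geq 1$
\[
s^{p|t_u|}u^{pt_s}=\eps_{L,\,R+pD},\qquad u^{qt_s}s^{q|t_u|}=\eps_{L+qD,\,R}.
\]
Since $D>0$, these are incomparable for all $p,q\geq 1$. So the pairs $(\eps_{L,R+pD},\eps_{L+D,R})$, $p=1,2,\dots$, satisfy the hypotheses of Theorem~\ref{Thm:main-theorem}. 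With this correction, your argument becomes a self-contained, elementary replacement for the paper's citation of Cho--Ru\v{s}kuc's Lemma~3.2. As written, however, (2)$\Rightarrow$(3) is not established.
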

\begin{proof}
The equivalence (1) $\iff$ (3) is \cite[Proposition~3.1+4.1]{ChoRuskuc2025}. The implication (1) $\implies$ (2) is always true (see \S\ref{Subsec:hom-setup}). We prove (2) $\implies$ (3) contrapositively. If $M$ contains infinitely many idempotents, then by \cite[Lemma~3.2 + Remark 3.3(3)]{ChoRuskuc2025} there exist infinitely many distinct pairs of \textit{incomparable} idempotents in $M$. Hence Theorem~\ref{Thm:main-theorem} applies with $\tau \colon M \hookrightarrow \FF_1$ being the inclusion map. 
\end{proof}

\bibliographystyle{amsalpha}
\bibliography{fp2-fim.bib}

\end{document}